\renewcommand{\cite}{\citep*}
\numberwithin{equation}{section}
\theoremstyle{plain}
\newtheorem{theorem}{Theorem}[section]
\newtheorem{corollary}[theorem]{Corollary}
\theoremstyle{definition}
\newtheorem{definition}[theorem]{Definition}
\renewcommand{\phi}{\varphi}
\newcommand{\eps}{\varepsilon}
\newcommand{\eq}{\eqref}
\newcommand{\bigo}{\mathrm{O}}
\newcommand{\lito}{\mathrm{o}}
\newcommand{\ind}{\mathbf{1}}
\newcommand{\U}{\mathrm{U}}
\def\E{\mathbbm{E}}
\newcommand{\law}{\mathscr{L}}
\newcounter{ctr}\loop\stepcounter{ctr}\edef\X{\@Alph\c@ctr}%
\edef\csname s\X\endcsname{\noexpand\mathscr{\X}}
\edef\csname c\X\endcsname{\noexpand\mathcal{\X}}
\edef\csname b\X\endcsname{\noexpand\boldsymbol{\X}}
\edef\csname I\X\endcsname{\noexpand\mathbbm{\X}}
\edef\csname r\X\endcsname{\noexpand\mathrm{\X}}
\def\ben#1{\begin{equation}#1\end{equation}}
\def\ba#1{\begin{align*}#1\end{align*}}
\def\ban#1{\begin{align}#1\end{align}}
\def\given{\typeout{Command 'given' should only be used within bracket command}}
\newcounter{@bracketlevel}
\def\@bracketfactory#1#2#3#4#5#6{
\expandafter\def\csname#1\endcsname##1{%
\addtocounter{@bracketlevel}{1}%
\global\expandafter\let\csname @middummy\alph{@bracketlevel}\endcsname\given%
\global\def\given{\mskip#5\csname#4\endcsname\vert\mskip#6}\csname#4l\endcsname#2##1\csname#4r\endcsname#3%
\global\expandafter\let\expandafter\given\csname @middummy\alph{@bracketlevel}\endcsname
\addtocounter{@bracketlevel}{-1}}%
}
\def\bracketfactory#1#2#3{%
\@bracketfactory{#1}{#2}{#3}{relax}{1mu plus 0.25mu minus 0.25mu}{0.6mu plus 0.15mu minus 0.15mu}
\@bracketfactory{b#1}{#2}{#3}{big}{1mu plus 0.25mu minus 0.25mu}{0.6mu plus 0.15mu minus 0.15mu}
\@bracketfactory{bb#1}{#2}{#3}{Big}{2.4mu plus 0.8mu minus 0.8mu}{1.8mu plus 0.6mu minus 0.6mu}
\@bracketfactory{bbb#1}{#2}{#3}{bigg}{3.2mu plus 1mu minus 1mu}{2.4mu plus 0.75mu minus 0.75mu}
\@bracketfactory{bbbb#1}{#2}{#3}{Bigg}{4mu plus 1mu minus 1mu}{3mu plus 0.75mu minus 0.75mu}
}
\def\abs#1{\vert#1\vert}
\def\bbbabs#1{\biggl\vert#1\biggr\vert}
\renewcommand\section{\@startsection {section}{1}{\z@}%
{-3.5ex \@plus -1ex \@minus -.2ex}%
{1.3ex \@plus.2ex}%
{\center\small\sc\mathversion{bold}\MakeUppercase}}
\def\subsection#1{\@startsection {subsection}{2}{0pt}%
{-3.5ex \@plus -1ex \@minus -.2ex}%
{1ex \@plus.2ex}%
{\bf\mathversion{bold}}{#1}}
\def\subsubsection#1{\@startsection{subsubsection}{3}{0pt}%
{\medskipamount}%
{-10pt}%
{\normalsize\itshape}{\kern-2.2ex. #1.}}
\def\blfootnote{\xdef\@thefnmark{}\@footnotetext}
\newcommand{\PD}{\mathrm{PD}}
\newcommand{\DP}{\mathrm{DP}}
\def\BC{\mathrm{BC}}
\begin{document}

\title{\sc\bf\large\MakeUppercase{Poisson-Dirichlet approximation for the stationary distribution of the inclusion process
}}
\author{\sc Han~L.~Gan}
\date{\it University of Waikato}
\maketitle


\begin{abstract} 
We consider the approximation of the stationary distribution of the finite inclusion process with the Poisson-Dirichlet distribution. Using Stein's method, we derive an explicit bound for the approximation error, which is of order $1/N$ in the thermodynamic limit. The results are achieved from a minor modification to Stein's method for Poisson-Dirichlet distribution approximation developed in~\cite{GR21}. The derivatives used on test functions in~\cite{GR21} were directional type derivatives specifically chosen for their measure preserving properties. Depending upon the application, these derivatives can prove cumbersome. In this note, we show that for certain test functions we can instead use more traditional derivatives, which simplifies the bounds for the Stein factors and is more amenable to the approximation of the inclusion process.
\end{abstract}

\section{Introduction}
The one parameter Poisson-Dirichlet distribution with parameter $\theta > 0$, $\PD(\theta)$,  is a probability measure on the infinite dimensional ordered simplex,
\ba{
\nabla_\infty := \{ (p_1, p_2, \ldots): p_1 \geq p_2 \geq \ldots, \sum_{i=1}^\infty p_i = 1 \}.
}
The distribution lacks an explicit form, but it can be represented in a variety of ways, such as a limit of Dirichlet distributions or via a stick breaking process, see~\cite{Feng2010} for more details and examples. As working with order statistics can be technically challenging, rather than using non-increasing ordering,~\cite{GR21} use a labelled version of the process with labels from a compact metric space $E$, and define the Dirichlet process in the following manner.
\begin{definition}[Dirichlet process]
Let $\theta>0$ and let $\pi$ be a probability measure on a compact metric space~$E$.
Let $(P_1,P_2,\ldots)\sim \PD(\theta)$ be independent of 
$\xi_1,\xi_2,\ldots$, which are i.i.d.\ with distribution~$\pi$.
Define the probability measure on $M_1 := M_1(E)$, the space of probability measures on~$E$,  by 
\ben{\label{mucomp}
\DP(\theta, \pi) := \law\bbclr{\sum_{i=1}^\infty P_i \delta_{\xi_i}}.
}
In~\cite{GR21}, Stein's method for Dirichlet process approximation and Poisson-Dirichlet approximation was developed and a general theorem for bounding the approximation error between a random measure of interest and a Dirichlet process was derived. Motivated by the approximation of the stationary distribution of the inclusion process, we make a minor refinement to the framework of~\cite{GR21}. This refinement makes the framework more amenable to the application for approximation of the stationary distribution of the inclusion process, and moreover may make Poisson-Dirichlet and Dirichlet process approximation using Stein's method more accessible and easier to use in a further applications. 

\end{definition}
\subsection{Application to the inclusion process}
Originating in~\cite{GKR07} and \cite{GKRV09}, the discrete inclusion process is a stochastic interacting particle system with $N$ particles on $L$ sites, which we can index as $\{1, \ldots, L\}$. We define the set of configurations corresponding to the proportions of the $N$ particles on $L$ sites as
\ba{
\mathcal G_L = \left\{ \mu \in [0,1]^L: \sum_{i=1}^L \mu_i = 1 \right\},
}
where $\mu_i$ is the proportion of the $N$ particles in site $i$. As the indices for the sites are arbitrary, we can embed the atomic masses of the process onto a lattice of width $1/L$ on $[0,1]$. If we set $x_i = i/L$, for any $\theta > 0$, the generator of this discrete inclusion process is as follows,
\ben{
\cA_1 f(\mu) = \sum_{i,j=1}^L N\mu_i\left(N \mu_j + \frac{\theta}{L}\right) \left[ f\left(\mu - \frac1N \delta_{x_i} + \frac1N \delta_{x_j}\right) - f(\mu) \right]. \label{eq:discgen}
}
The process can be briefly described in the following manner. At unit rate, for each (ordered) pair of particles on sites $(x_i,x_j)$ a particle is moved from site $x_i$ to site $x_j$. In addition,  for each particle in the system, at rate $\theta/L$ independently, the particle is removed and a new particle is added at one of the $L$ sites chosen uniformly at random. Notably this model is also equivalent to a population genetics model, a type of Moran model with uniform mutation.

The limiting behaviour of this process has been extensively studied in a variety of different limiting regimes, see~\cite{CGG22},~\cite{JCG20},~\cite{CDG17} and~\cite{Kim21} for example. Let $W$ follow stationary distribution associated with $\cA_1$. In this paper, our goal is to assess the distribution of the masses in $W$, and compare this to distribution of the masses of the stationary distribution of the Dirichlet process with $E = [0,1]$ and $\pi = \U[0,1]$. We now state the main approximation result. Note that the definition of the family of test functions $\cH_2$ is formally defined in Section~\ref{sec:stn} and $\| \cdot \|_\infty$ denotes the supremum norm.
\begin{theorem}\label{mainthm}
For any $N$, $L$ and $\theta > 0$, let $W$ be distributed as the stationary probability measure associated with $\cA_1 f$ in~\eq{eq:discgen}. Set $E = [0,1]$, $\pi = \U[0,1]$ and let $Z\sim\DP(\theta,\pi)$. Then for any $h \in \cH_2$ such that $h(\mu) = \langle \phi, \mu^k \rangle$,
\ba{
\abs{ \E h(W) - \E h(Z)} \leq \bigg[ \frac{k(k-1)}{2L(\theta + 1)} + \frac{2 k(k-1)\theta}{N(\theta + 1)} + \frac{8k(k-1)(k-2)}{9 N (\theta + 2)} \bigg] \|\phi\|_\infty.
}
\end{theorem}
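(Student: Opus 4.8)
The plan is to run Stein's method for $\DP(\theta,\U[0,1])$ through the generator~$\cA_1$ of the inclusion process, using the refined framework of Section~\ref{sec:stn}. Let $\cA$ denote the Stein operator characterising $\DP(\theta,\U[0,1])$ in the ``traditional derivative'' form introduced there, and let $g=g_h$ solve the Stein equation $\cA g = h - \E h(Z)$, together with the accompanying bounds on the first three derivatives of~$g$ (the Stein factors, which carry the denominators $\theta+1$ and $\theta+2$, the combinatorial dependence on~$k$, and a factor $\|\phi\|_\infty$). Because $g$ extends to a polynomial of degree at most~$k$ in the coordinates of~$\mu$, it is an admissible test function for~$\cA_1$ in~\eq{eq:discgen}, and stationarity of~$W$ gives $\E[\cA_1 g(W)]=0$. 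Combined with the Stein equation this yields $\E h(W)-\E h(Z)=\E\bkle{(\cA-\cA_1)g(W)}$, so the problem reduces to bounding $(\cA-\cA_1)g(\mu)$ over configurations~$\mu$ in the support of~$W$.

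Next I would Taylor expand $\cA_1 g$ in the jump size $1/N$. Each transition sends $\mu$ to $\mu-\tfrac1N\delta_{x_i}+\tfrac1N\delta_{x_j}$, so
\be{
g\bklr{\mu-\tfrac1N\delta_{x_i}+\tfrac1N\delta_{x_j}}-g(\mu)=\tfrac1N(\partial_j-\partial_i)g(\mu)+\tfrac1{2N^2}(\partial_i-\partial_j)^2g(\mu)+(\text{remainder}),
}
and one multiplies by the rate $N\mu_i(N\mu_j+\theta/L)$ and sums over $i,j$. The $N^2\mu_i\mu_j$ part of the rate contributes nothing at first order (the $\partial_i$ and $\partial_j$ sums cancel) and at second order yields the Wright--Fisher resampling part $\sum_i\mu_i\partial_i^2 g-\sum_{i,j}\mu_i\mu_j\partial_i\partial_j g$; the $N\theta\mu_i/L$ part yields at first order the mutation drift $\theta\bklr{\tfrac1L\sum_j\partial_j g-\sum_i\mu_i\partial_i g}$. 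Together these assemble the Stein operator $\cA_L$ for $\DP(\theta,\pi_L)$, where $\pi_L=\tfrac1L\sum_{j=1}^L\delta_{x_j}$ is the \emph{discrete} uniform mutation measure actually used by the particle system. What remains is (i) the second-order Taylor contribution of the mutation part, of order $\theta/N$, which after two differentiations of~$g$ gives the term $2k(k-1)\theta/(N(\theta+1))$; and (ii) the third-order and higher Taylor remainder of the resampling part, of order $1/N$, which after three differentiations of~$g$ gives the term $8k(k-1)(k-2)/(9N(\theta+2))$; both are controlled by the second- and third-derivative Stein factors.

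It then remains to pass from $\cA_L$ (mutation measure $\pi_L$) to $\cA$ (mutation measure $\U[0,1]$). Since the mutation measure enters linearly, $\cA g-\cA_L g=\theta\int_{[0,1]}\tfrac{\partial g}{\partial\mu(x)}\,\bklr{\U[0,1]-\pi_L}(dx)$; equivalently one may compare $\E h(Z)$ with $\E h(Z_L)$, $Z_L\sim\DP(\theta,\pi_L)$, via the explicit Dirichlet-process moment formula (a sum over set partitions of $[k]$ with Ewens weights). Either way the regularity of~$g$ built into~$\cH_2$, together with a second-derivative Stein factor, turns this into the $\bigo(1/L)$ contribution $k(k-1)/(2L(\theta+1))$. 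Summing the three pieces and bounding each by the appropriate Stein factor times $\|\phi\|_\infty$ gives the stated inequality.

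The main obstacle, and the reason the ``traditional derivative'' reformulation of Section~\ref{sec:stn} is needed, is the interplay between the Taylor expansion of~$\cA_1$ --- naturally written with ordinary partials $\partial_i=\partial/\partial\mu_i$ of~$g$ extended off the simplex --- and the measure-preserving directional derivatives used to define the Stein operator in~\cite{GR21}. One must verify that the two descriptions of~$\cA$ coincide on the relevant polynomial test functions, so that the cancellation $\sum_i\mu_i=1$ in the second-order resampling term is legitimate and no spurious constraint terms survive, and one must track the degree~$k$ cleanly through the repeated differentiation of~$g$ so that the Stein factors emerge as the stated polynomials in~$k$ with the sharp constants $\tfrac12$, $2$, $\tfrac89$. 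A secondary point is checking that the discretization error $\cA-\cA_L$ is genuinely $\bigo(1/L)$ with only $\|\phi\|_\infty$ on the right, which relies on the precise definition of~$\cH_2$.
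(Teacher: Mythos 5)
Your proposal follows essentially the same route as the paper: generator comparison using stationarity of $W$, Taylor expansion of $\cA_1 f_h$ in the jump size $1/N$ so that the resampling and mutation parts match $\cA_2 f_h(W)$, with the three residual contributions (the $\bigo(1/L)$ discretization of the uniform mutation measure as a Riemann-sum error, the second-order mutation remainder of order $\theta/N$, and the third-order resampling remainder of order $1/N$) each bounded by the corresponding derivative bounds of Theorem~\ref{thm:stnfcts}, yielding exactly the three stated terms. This is the paper's proof in all essentials, so no further comment is needed.
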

This bound is general, holds for any choice of $N, L$ and $\theta$, and will converge to zero as $N \to \infty$ when $L$ scales with $N$ in a positive manner. In~\cite[Section~3]{JCG20}, it is stated that if $d \rightarrow 0$ and $dL \rightarrow \alpha > 0$, then the limiting distribution is the Poisson-Dirichlet distribution. In the notation of this manuscript, $d = \theta / L$, and hence our bound is entirely consistent with existing results. Furthermore, in the so called thermodynamic limit, where $N, L \to \infty$ such that $N/L$ converges to a positive constant $\theta$, the bound is therefore of order $\frac{1}{N}$.

Moreover, using~\cite[Lemma 1.15]{GR21} we can show convergence in sampling formulas. Given a random sample of size $n$ from $W$ denoted by $(y_1, \ldots, y_n)$, let $\mathcal{S}_n(W)$ denote the denote the probability measure on set partitions of $\{ 1, \ldots, n\}$ induced by the relation $i \sim j \Leftrightarrow y_i = y_j$. Similarly, let $\mathcal{S}_n(Z)$ be analogous for $Z\sim \DP (\theta, \pi)$. Notably $\mathcal{S}_n(Z)$ is known to have an explicit form, namely the Ewens sampling formula. We then have the following result.
\begin{corollary}\label{corollary}
For any $N$, $L$ and $\theta>0$, let $W_N$ be distributed as the stationary probability measure of $\cA_1 f$ in~\eq{eq:discgen}. Then for $Z \sim \DP(\theta, \pi)$ and any $n > 0$,
\ba{
d_{TV}(\mathcal  S_n(W_N), \mathcal S_n(Z)) \leq \frac{n(n-1)}{2L(\theta + 1)} + \frac{2n(n-1)\theta}{N(\theta + 1)} + \frac{8n(n-1)(n-2)}{9 N (\theta + 2)}
}
\end{corollary}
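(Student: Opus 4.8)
The plan is to reduce the total variation statement about random set partitions to the test-function bound already established in Theorem \ref{mainthm}. The key tool is \cite[Lemma 1.15]{GR21}, which (in the setting $E=[0,1]$, $\pi=\U[0,1]$) characterises the distribution $\mathcal{S}_n(\cdot)$ of the sampling partition in terms of expectations of functionals of the form $h(\mu)=\langle \phi,\mu^k\rangle$. Concretely, for a partition $\sigma$ of $\{1,\dots,n\}$ with blocks of sizes $k_1,\dots,k_r$, the probability $\mathbb{P}(\mathcal{S}_n(W)=\sigma)$ can be written as an expectation $\E h_\sigma(W)$ for an appropriate test function $h_\sigma$ lying in $\cH_2$ and built from products of monomials $\langle \ind_{[0,1]},\mu^{k_\ell}\rangle$; since $\pi$ is the uniform distribution the relevant integrating function $\phi$ is a constant, so $\|\phi\|_\infty=1$. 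The point is that $h_\sigma$ has the right shape to be handled by Theorem \ref{mainthm}, possibly after expanding a product of monomials into a single monomial of total degree $n$ (the exchangeability of the sample means only the total sample size $n$ enters).

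First I would invoke the definition of total variation distance for measures on the finite set of partitions of $\{1,\dots,n\}$, namely
\ba{
d_{TV}(\mathcal{S}_n(W_N),\mathcal{S}_n(Z)) = \sup_{A}\babs{\mathbb{P}(\mathcal{S}_n(W_N)\in A) - \mathbb{P}(\mathcal{S}_n(Z)\in A)},
}
where the supremum is over all sets $A$ of partitions, and then write $\mathbb{P}(\mathcal{S}_n(\cdot)\in A) = \E h_A(\cdot)$ using \cite[Lemma 1.15]{GR21}, where $h_A$ is a (finite, nonnegative) combination of functionals $\langle \phi,\mu^k\rangle$ with $k\le n$ and with $\|\phi\|_\infty\le 1$ after the reduction. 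Second, I would apply Theorem \ref{mainthm} to each such $h_A$. Since the coefficient in the bound of Theorem \ref{mainthm}, namely
\ba{
\frac{k(k-1)}{2L(\theta+1)} + \frac{2k(k-1)\theta}{N(\theta+1)} + \frac{8k(k-1)(k-2)}{9N(\theta+2)},
}
is increasing in $k$, and since the sampling-partition functionals have total degree exactly $n$, each term is bounded by the expression with $k$ replaced by $n$. Third, because $h_A$ is (after the reduction) an average/convex combination of such degree-$n$ functionals with the weights summing to at most one, the triangle inequality gives the same bound for $\babs{\E h_A(W_N) - \E h_A(Z)}$, uniformly in $A$; taking the supremum over $A$ yields the claimed inequality.

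The main obstacle I anticipate is the bookkeeping in the first step: verifying precisely that the partition probabilities can be expressed through test functions of the form covered by $\cH_2$ and by the hypothesis $h(\mu)=\langle\phi,\mu^k\rangle$ of Theorem \ref{mainthm}, and in particular checking that a product of monomials $\prod_\ell \langle \ind,\mu^{k_\ell}\rangle$ corresponding to several blocks can be rewritten (using the multinomial expansion of $\langle\phi,\mu^{k_1+\cdots+k_r}\rangle$ over the lattice supporting $\mu$, or by an exchangeability argument on the sample) as a single test function of degree $n=\sum_\ell k_\ell$ with sup-norm $1$, so that Theorem \ref{mainthm} applies with $k=n$. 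Once this identification is in place — which is exactly the content of \cite[Lemma 1.15]{GR21} that we are permitted to assume — the remainder is the routine monotonicity-and-triangle-inequality argument sketched above, and no new estimates are needed.
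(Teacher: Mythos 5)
Your overall strategy --- represent the partition probabilities as degree-$n$ moment functionals with sup-norm-one test functions and feed them into the degree-$n$ version of the main bound --- is in spirit the same reduction the paper performs, except the paper does not re-derive it: its proof is literally ``repeat the proof of \cite[Lemma 1.15]{GR21}, replacing the derivative bounds of \cite[Theorem 2.7]{GR21} by those of Theorem~\ref{thm:stnfcts}.'' Two points in your write-up need repair. First, the representation you sketch is incorrect as written: for any probability measure $\mu$ one has $\langle \ind_{[0,1]},\mu^{k_\ell}\rangle=1$, so products of such monomials carry no information about block sizes, and no multinomial re-expansion is involved. The correct identity is $\mathbb{P}(\mathcal S_n(W)=\sigma\mid W)=\langle\phi_\sigma,W^n\rangle$ with $\phi_\sigma(y_1,\ldots,y_n)=\prod_{i\sim_\sigma j}\ind[y_i=y_j]\prod_{i\not\sim_\sigma j}\ind[y_i\neq y_j]$; summing over $\sigma\in A$ gives a single function $\phi_A$ of degree exactly $n$ with $\|\phi_A\|_\infty\le1$ (the events are disjoint), so your monotonicity-in-$k$ and convex-combination steps are unnecessary.

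Second, and more substantively, these $\phi_\sigma$ are indicators of diagonal events and hence are not continuous on $[0,1]^n$, so $h_A\notin\cH_2$ and Theorem~\ref{mainthm} cannot simply be invoked: its hypothesis $\phi\in\mathrm{C}(E^n)$ enters through the dual-process construction and the duality identity~\eq{eq:dual} used to build and bound $f_h$. This is precisely why the paper routes the corollary through the proof of \cite[Lemma 1.15]{GR21}, which handles the sampling functionals directly, rather than through the statement of Theorem~\ref{mainthm}. To close your argument you must either (i) check that the proofs of Theorem~\ref{thm:stnfcts} and Theorem~\ref{mainthm} use only $\|\phi\|_\infty$ and extend the duality relation to bounded measurable $\phi$ (e.g.\ by an approximation or monotone-class argument), or (ii) follow the GR21 proof verbatim with the new Stein factors, as the paper does. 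Once that is in place, specialising to degree $n$ and $\|\phi\|_\infty\le1$ gives exactly the stated bound.
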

To interpret the bound in the above corollary, suppose $L = \bigo(N^\alpha)$ for some $\alpha > 0$. Then the bound will be small when the sample size $n \ll  N^{\frac{\alpha}{2}}$ and $n \ll N^{\frac{1}{3}}$.

The remainder of the paper will consist of two sections. In the first section we will briefly outline Stein's method for Poisson-Dirichlet and Dirichlet process approximation and refine the bounds of~\cite{GR21} for the derivatives of the solutions to the Stein equation for test functions $h \in \cH_2$. In the final section we prove the approximation error bounds for the inclusion process.

\section{Refining the Stein method for the Dirichlet process}\label{sec:stn}
Pioneered in~\cite{Stein72} for Normal approximation, Stein's method is a powerful probabilistic tool used to explicitly bound the distributional distance between two distributions. Since the original seminal paper, it has been developed for a wide variety of distributions. For many examples and applications, see for example the surveys or monographs~\cite{Ross11, Chatterjee2014,introstein,LRS2017}.

We first give a brief summary of the Stein's method approach used in~\cite{GR21}. For any random probability measure $W$, and $Z \sim \DP(\theta, \pi)$, our goal is to bound quantities of the form $\abs{ \E h(W) - \E h(Z)}$, for all functions $h$ in a sufficiently rich class of test functions. To achieve this, we begin by characterising $\DP(\theta, \pi)$ as the unique stationary distribution of a Fleming-Viot process~\cite{FV79} with generator,
 \ban{\label{eq:FVgen}
\cA_2 f (\mu)=\frac{\theta}{2} \int_E\partial_{x} f(\mu) \bclr{\pi-\mu}(dx)
+ 
	 \frac12\int_{E^2}  \bclr{\mu(dx)\delta_x (dy) - \mu(dx) \mu(dy)} \partial_{xy} f(\mu).
}
The specific variant of Stein's method we use is often called the generator method, first introduced in~\cite{B88,B90} and~\cite{Gotze1991}. The general idea of the approach is that as $Z$ follows the stationary distribution of $\cA_2$, then $\E \cA_2 f(Z) = 0$ for all $f$. If for some random measure $W$ that is close to $Z$ in distribution, then one would expect $\E \cA_2 f(W) \approx 0$. The next step is then to choose specific test functions $f$ that quantify the distributional distance between $W$ and $Z$. More precisely, for any function $h$ from some family of functions $\cH$, we solve for the function $f_h$ that satisfies
\ban{\label{eq:stneq}
\cA_2 f_h(\mu) = h(\mu) - \E h(Z).
}
We then set $\mu = W$, which implies $\abs{\E h(W) - \E h(Z)} =  \abs{ \E \cA_2f_h(W)}$, and we then seek a bound for $\abs{\E \cA_2f_h(W)}$. 

Two different choices of families of test functions for $\cH$ are used in~\cite{GR21}.  Let $\mathrm{C}(E^K)$ denote the set of continuous functions from $E^k \mapsto \IR$. For $\mu \in M_1$ and $\phi \in \mathrm{C}(E)$, set $\langle \phi, \mu \rangle = \int_E \phi(x) \mu(dx)$ and $\BC^{2,1}(\IR^k)$ to be the set of functions with two bounded and continuous derivatives with second derivative Lipschitz. The first family of test functions is defined as,
\ba{
\cH_1 = \{ h(\mu) := H( \langle \phi_1, \mu \rangle, \ldots, \langle \phi_k, \mu \rangle) : k \in \IN, H \in \BC^{2,1}(\IR^k), \phi_i \in \mathrm{C}(E), i = 1, \ldots, k\}.
}
The second family of test functions is defined as,
\ba{
\cH_2 = \{ h(\mu) := \langle \phi, \mu^k \rangle : k \in \IN, \phi \in \mathrm{C}(E^k)\}.
}
To successfully apply Stein's method, bounds on $f_h$ in~\eq{eq:stneq} and its derivatives play a critical role. For each of the two families of test functions $\cH_1$ and $\cH_2$, two different approaches were used to bound the derivatives of $f_h$. For $\cH_1$, a probabilistic coupling approach that utilises properties of the coalescent process was used and for $\cH_2$ a different approach based upon a dual process was used. The differential operator for the derivatives used was defined by
\ban{
\partial_x F(\mu):= \lim_{\eps \to 0^+} \frac{ F( (1-\eps)\mu + \eps\delta_x) - F(\mu)}{\eps}, \label{eq:form1}
} 
as opposed to a more standard definition of derivative of
\ban{
\partial_x F(\mu) := \lim_{\eps \to 0^+} \frac{F(\mu + \eps \delta_x) - F(\mu)}{ \eps}. \label{eq:form2}
}
Derivatives of the form of~\eq{eq:form1} were used as for the family $\cH_1$, the probabilistic coupling arguments requires the measures to be evaluated by $F$ to be probability measures. For consistency, the same form of derivative was then also used for the family $\cH_2$ as well. It turns out that the proof method for $\cH_2$ does not require the measures to be probability measures, and as such, the more commonly form for derivatives as in~\eq{eq:form2} (see~\cite{EK93, EG93} for example) can be used and this is advantageous for our application to approximate of the stationary distribution of the inclusion process.

Let $W$ follow the stationary distribution of the discrete inclusion process associated with $\cA_1 f$ in~\eq{eq:discgen}. Then noting that this implies $\E \cA_1 f(W) = 0$, for any function $h \in \cH_2$, our approach will be to derive a bound for
\ban{
\abs{ \E h(W) - \E h (Z)} = \abs{ \E \cA_2 f_h(W)} = \abs{ \E \cA_2 f_h(W) - \E \cA_1 f_h(W)}, \label{eq:gencmp}
}
by exploiting properties of $f_h$ and its derivatives. This method is known as the generator comparison method in Stein's method literature. To evaluate the right hand side of the above, the natural approach is to take the Taylor expansion of $ f_h(\mu - \frac1N \delta_{i/L} + \frac1N \delta_{j/L}) - f_h(\mu)$ in $\cA_2 f$ to match the derivatives in $\cA_2f$. At this point, derivatives of the form~\eq{eq:form2} are more useful as they naturally match the terms in the Taylor expansion, whereas derivatives of the form~\eq{eq:form1} do not. Hence we rederive the results of~\cite[Theorem 2.7]{GR21} but for derivatives of the form~\eq{eq:form2}. We note that the proof method used in this section is effectively a simplification of the proof of~\cite[Theorem 2.7]{GR21} for test functions in $\cH_2$ using a different form of derivative, and as a result we attempt to only include the main ideas and details, and refer the reader to~\cite{GR21} for the full details.

For a test function $h \in \cH_2$, where $h(\mu) = \langle \phi, \mu^k \rangle$, elementary computations show that the generator $\cA_2$ can be written in the form
\ban{
\cA_2 h(\mu) = \sum_{1 \leq i < j \leq k} \big[ \langle \Phi^{(k)}_{ij} \phi, \mu^{k-1} \rangle - \langle \phi, \mu^k \rangle \big] + \frac{\theta}{2} \sum_{1 \leq i \leq k} \big[ \langle \phi, \mu^{i-1} \pi \mu^{k-i} \rangle - \langle \phi, \mu^k \rangle \big],\label{eq:dualgen}
}  
where $ \Phi^{(k)}_{ij} \phi (x_1, \ldots, x_{k-1}) = \phi(x_1, \ldots, x_{j-1}, x_i, x_j, \ldots, x_{k-1})$. Following~\cite[Section 3, page 353]{EK93}, we define a dual process $(Y_{\phi}(t))_{t \geq 0}$ on $\cup_{j \geq 0} \mathrm{C}(E^j)$ where we set $Y(0) = \phi$. Using the generator~\eq{eq:dualgen}, for each $i< j$, the process jumps from $\phi$ to $\Phi_{ij}^{(k)} \phi$ at rate 1, and at rate $\frac{\theta}{2}$, for each $1 \leq i \leq k$, $\phi$ transitions to a $k-1$ dimensional function by integrating dimension $i$ with respect to $\pi$. Notably this process has similarities to the inclusion process, hence the approximation result in Theorem~\ref{mainthm} is unsurprising. Using standard duality arguments~\cite[Theorem 3.1]{EK93}, setting $Z_\mu(t)$ to be the Fleming-Viot processes governed by $\cA_2$ with $Z_\mu(0) = \mu$, 
\ban{
\E \big[ h(Z_\mu(t)) \big] = \E \big[ \langle Y_{\phi}(t), \mu^{k-M_k(t)} \rangle  \ind[ M_k(t) < k]\big], \label{eq:dual}
}
where $M_k(t)$ is the number of transitions of the process $Y_\phi(\cdot)$ up to time $t$. Note that we can without loss of generality we assume that $\E h(Z) = 0$.
\begin{theorem}\label{thm:stnfcts}
For $h \in \cH_2$ such that $h(\mu) = \langle \phi,\mu^k\rangle$, the solution $f_h$ given in~\eq{eq:stneq} is well defined and
\ba{
\sup_x\| \partial_x f_h \|_\infty &\leq \frac{2k}{\theta} \|\phi\|_\infty, \\
\sup_{x,y} \| \partial_{xy} f_h\|_\infty &\leq \frac{k(k-1)}{\theta+1} \|\phi\|_\infty,\\
\sup_{x,y,z} \| \partial_{xyz} f_h\|_\infty & \leq \frac{2k(k-1)(k-2)}{3(\theta+2)} \|\phi\|_\infty.
}
\end{theorem}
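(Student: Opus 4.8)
The plan is to build the solution $f_h$ explicitly from the Fleming--Viot semigroup using the duality identity~\eq{eq:dual}, differentiate it under the integral with the derivative~\eq{eq:form2}, and then extract all three bounds from a single moment computation for the ``dimension'' of the dual process. Since we may assume $\E h(Z)=0$, the generator-method candidate is
\[
f_h(\mu)=-\int_0^\infty \E\bigl[h(Z_\mu(t))\bigr]\,dt=-\int_0^\infty \E\bigl[\langle Y_\phi(t),\mu^{k-M_k(t)}\rangle\,\ind[M_k(t)<k]\bigr]\,dt ,
\]
the second equality being~\eq{eq:dual}. I would first record that every elementary transition of $Y_\phi(\cdot)$ --- a relabelling $\Phi^{(j)}_{i i'}$ or an integration of one argument against $\pi$ --- lowers the number of arguments by exactly one and is a contraction for $\|\cdot\|_\infty$; hence $\|Y_\phi(t)\|_\infty\le\|\phi\|_\infty$ for all $t$, and the dimension $D(t):=k-M_k(t)$ is a pure death chain on $\{k,k-1,\dots,0\}$ whose holding time $T_j$ in state $j\ge1$ is exponential with rate $\binom{j}{2}+\tfrac{j\theta}{2}=\tfrac12\,j(j-1+\theta)$ (read off from~\eq{eq:dualgen}). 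Since $\{M_k(t)<k\}=\{D(t)\ge1\}$, one has $\int_0^\infty\IP[M_k(t)<k]\,dt=\E\bigl[\sum_{j=1}^k T_j\bigr]<\infty$, so the integral converges absolutely and $f_h$ is well defined; that it solves~\eq{eq:stneq} follows by differentiating the semigroup, exactly as in~\cite{GR21}.

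Next, for $\psi\in\mathrm{C}(E^m)$, expanding $(\mu+\eps\delta_x)^m$ and keeping the order-$\eps$ term gives, for the derivative~\eq{eq:form2},
\[
\partial_x\langle\psi,\mu^m\rangle=\sum_{i=1}^m \bigl\langle \psi(\,\cdot,\dots,\underset{i}{x},\dots,\cdot\,),\,\mu^{m-1}\bigr\rangle ,
\]
i.e.\ a sum of $m$ terms, each obtained by fixing one argument of $\psi$ at $x$ and integrating the rest against $\mu$; iterating, $\partial_{xy}\langle\psi,\mu^m\rangle$ is a sum of $m(m-1)$ such terms and $\partial_{xyz}\langle\psi,\mu^m\rangle$ a sum of $m(m-1)(m-2)$ terms, each of modulus at most $\|\psi\|_\infty$. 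Taking $\psi=Y_\phi(t)$, $m=D(t)$ and using $\|Y_\phi(t)\|_\infty\le\|\phi\|_\infty$ yields, for any $r$-fold derivative with $r\in\{1,2,3\}$,
\[
\bigl|\,\partial_{x_1\cdots x_r}\langle Y_\phi(t),\mu^{D(t)}\rangle\,\bigr|\ \le\ D(t)\bigl(D(t)-1\bigr)\cdots\bigl(D(t)-r+1\bigr)\,\|\phi\|_\infty\,\ind[M_k(t)<k]\ \le\ k^3\|\phi\|_\infty\,\ind[M_k(t)<k],
\]
a bound uniform in $\mu$ and in the derivative points and integrable in $t$ by the previous paragraph; so, applying dominated convergence to the difference quotients, one may differentiate $f_h$ under the integral.

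It then remains to compute the resulting moments. Writing $(j)_r:=j(j-1)\cdots(j-r+1)$, differentiating $f_h$ and inserting the bound above gives
\[
\sup_{x_1,\dots,x_r}\|\partial_{x_1\cdots x_r} f_h\|_\infty\ \le\ \|\phi\|_\infty\,\E\Bigl[\int_0^\infty (D(t))_r\,dt\Bigr]\ =\ \|\phi\|_\infty\sum_{j=r}^k (j)_r\,\E T_j\ =\ 2\|\phi\|_\infty\sum_{j=r}^k \frac{(j-1)_{r-1}}{\,j-1+\theta\,},
\]
using $\E T_j = 2/(j(j-1+\theta))$ and $(j)_r/j=(j-1)_{r-1}$. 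Since $j-1+\theta\ge r-1+\theta$ for $j\ge r$ and, by the hockey-stick identity, $\sum_{j=r}^k (j-1)_{r-1}=\sum_{m=r-1}^{k-1}(m)_{r-1}=(r-1)!\binom{k}{r}=(k)_r/r$, this is at most $\dfrac{2(k)_r}{r(\theta+r-1)}\|\phi\|_\infty$. Specialising to $r=1,2,3$ gives $\dfrac{2k}{\theta}\|\phi\|_\infty$, $\dfrac{k(k-1)}{\theta+1}\|\phi\|_\infty$ and $\dfrac{2k(k-1)(k-2)}{3(\theta+2)}\|\phi\|_\infty$, which are exactly the three asserted bounds.

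The main obstacle is not the algebra but the two analytic interchanges: showing the time integral defining $f_h$ converges (which I reduce, via the contraction property of the dual dynamics, to $\E[\sum_j T_j]<\infty$, immediate from the explicit holding-time rates) and showing one may differentiate through it (which I reduce to the uniform, $t$-integrable domination of the difference quotients established above). The remaining ingredients --- the binomial expansion of $(\mu+\eps\delta_x)^m$ and the falling-factorial/hockey-stick summation --- are elementary, and the well-posedness of~\eq{eq:stneq} together with the identity $\cA_2 f_h=h-\E h(Z)$ can be imported essentially verbatim from~\cite{GR21}, since replacing~\eq{eq:form1} by~\eq{eq:form2} does not touch that part of the argument.
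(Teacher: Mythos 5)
Your proposal is correct and follows essentially the same route as the paper: it represents $f_h$ through the duality formula, differentiates under the time integral using the derivative~\eq{eq:form2}, and bounds each term by a falling factorial of the current dual dimension times the expected exponential holding time $\E T_j = 2/(j(j-1+\theta))$. Your unified falling-factorial/hockey-stick summation and the explicit domination argument justifying the interchange are a slightly more detailed packaging of what the paper does order by order, but the underlying argument is the same.
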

\begin{proof}
We begin by noting that noting that~\cite{GR21} already show that $f_h$ is well defined and satisfies~\eq{eq:stneq}. In particular the solution is given by
\ba{
f_h(\mu) &= - \int_0^\infty  [\E h(Z_\mu(t)) - \E h (Z)]  dt\\
	&= -\int_0^\infty \E \big[ \langle Y_\phi(t), \mu^{k-M(t)} \rangle  \ind[ M_k(t) < k] \big] dt,
} 
where $(Z_\mu(t))_{t \geq 0}$ is a Fleming-Viot process following generator $\cA_2$ and $Z_\mu(0) = \mu$ in the first line and the second line follows from using the duality~\eq{eq:dual} and the assumption that $\E h(Z) = 0$. For the first derivative,
\ba{
\partial_x f_h(\mu) &= - \lim_{\eps \to 0}\frac{1}{\eps} \int_0^\infty \E\bigg[ \big[ \langle Y_\phi(t), (\mu + \eps \delta_x)^{k - M(t)} \rangle - \langle Y_\phi(t), \mu^{k-M(t)} \rangle \big]  \ind[ M_k(t) < k] \big] \bigg] dt\\
	&= - \lim_{\eps \to 0} \frac{1}{\eps} \int_0^\infty \sum_{i=1}^k \E \int_{\tau_{i-1}}^{\tau_i} \big[ \langle Y_\phi(t), (\mu + \eps \delta_x)^{k - i+1)} \rangle - \langle Y_\phi(t), \mu^{k-i+1} \rangle \big] dt,
}
where $\tau_i$ denotes the $i$-th transition for for the process $Y_\phi(t)$ and $\tau_0 = 0$. Noting that 
\[ \|  \langle Y_\phi(t), (\mu + \eps \delta_x)^{s)} \rangle - \langle Y_\phi(t), \mu^{s} \rangle\| \leq \eps s \|\phi\|_\infty + \lito (\eps), \]
 and $\tau_{i} - \tau_{i-1}$ is exponentially distributed with rate $(k-i+1)(k-i+\theta)/2$, yields
\ba{
\sup_x \|\partial_x f_h\|_\infty &\leq \lim_{\eps \to 0}\frac{1}{\eps} \bigg[\sum_{i=1}^k \left(\|\phi\|_\infty  (k-i+1)  \eps \cdot \E(\tau_i - \tau_{i-1}) + \lito(\eps)\right) \bigg]\\
	&= \|\phi\|_\infty \sum_{i=1}^k \frac{2}{k-i+\theta}\\
	&\leq \frac{2k}{\theta}  \|\phi\|_\infty.
}
For the second and third derivatives, using the same arguments and that 
\ba{ \|  \langle Y_\phi(t), (\mu + \eps_1 \delta_x + \eps_2\delta_y)^{s}& \rangle - \langle Y_\phi(t), (\mu + \eps_1 \delta_x)^{s} \rangle - \langle Y_\phi(t), (\mu + \eps_2\delta_y)^{s} \rangle + \langle Y_\phi(t), \mu^{s} \rangle\| \\
&\leq \eps_1\eps_2 s(s-1) \|\phi\|_\infty + \lito (\eps_1\eps_2) 
}
for the second derivative and analogously for the third derivative yields the following bounds.
\ba{
\sup_{x,y} \| \partial_{xy} f_h\|_\infty &\leq \sum_{i=1}^{k-1} \| \phi\|_\infty (k-i+1)(k-i)\E(\tau_i - \tau_{i-1})\\
	&= \|\phi\|_\infty \sum_{i=1}^{k-1} \frac{2(k-i)}{k-i+\theta}\\
	&\leq \frac{k(k-1)}{\theta+1} \|\phi\|_\infty.
}
\ba{
\sup_{x,y,z} \| \partial_{xyz} f_h\|_\infty &\leq \sum_{i=1}^{k-2} \| \phi\|_\infty (k-i+1)(k-i)(k-i-1)\E(\tau_i - \tau_{i-1})\\
	&= \|\phi\|_\infty \sum_{i=1}^{k-2} \frac{2(k-i)(k-i-1)}{k-i+\theta}\\
	&\leq \frac{2k(k-1)(k-2)}{3(\theta+2)} \|\phi\|_\infty.
}

\end{proof}

\section{Proof of the inclusion process approximation results}
\begin{proof}[Proof of Theorem~\ref{mainthm}]
Recall the generator we use for the Fleming-Viot process~\eq{eq:FVgen} with $E = [0,1]$ and $\pi$ the uniform measure on $E$ is 
\ben{\label{eq:dpgen}
\cA_2 f (\mu)=\theta \int_0^1 \partial_{x} f(\mu) \bclr{\pi-\mu}(dx)
+ 
	 \int_0^1 \int_0^1 \bclr{\mu(dx)\delta_x (dy) - \mu(dx) \mu(dy)} \partial_{xy} f(\mu),
}
and that the generator for the (discrete) inclusion process is
\ba{
\cA_1 f(\mu) = \sum_{i,j=1}^L N \mu_i\left(N \mu_j + \frac{\theta}{L}\right) \left[ f\left(\mu - \frac1N \delta_{i/L} + \frac1N \delta_{j/L}\right) - f(\mu) \right]. 
}
Furthermore, recall from~\eq{eq:gencmp} that our goal is to bound
\ban{
\abs{ \E h(W) - \E h (Z)} = \abs{ \E \cA_2 f_h(W)} = \abs{ \E \cA_2 f_h(W) - \E \cA_1 f_h(W)}.\label{eq:1}
}
where $W$ to be a random measure following the stationary distribution associated with $\cA_1$. For convenience we write $W = \sum_{i=1}^L W_i \delta_{x_i}$, where $x_i = i/L$. 
Our goal is to use the Taylor expansion $\cA_1f(W)$, match terms to $\cA_2f(W)$ and then carefully bound the remainder. We note the Taylor expansion,
\ba{
f \left(W - \frac1N \delta_{x_i} + \frac1N \delta_{x_j}\right) - f(W) &= -\frac{1}{N} \partial_{x_i} f(W) + \frac1N \partial_{x_j} f(W) \\
	&\ \  + \frac{1}{2N^2} \partial^2_{x_ix_i} f(W)+ \frac{1}{2N^2} \partial^2_{x_jx_j} f(W) - \frac{1}{N^2} \partial^2_{x_ix_j} f(W)\\
	&\ \ +R(W).
}
Then given we can sum either over all $i, j$ or $i \neq j$ when mathematically convenient as for $i = j$, $f \left(W - \frac1N \delta_{x_i} + \frac1N \delta_{x_j}\right) - f(W)= 0$,
\ban{
\cA_1f(W) &= \sum_{i \neq j}^LN^2  W_iW_j \bigg[-\frac{1}{N} \partial_{x_i} f(W) + \frac1N \partial_{x_j} f(W)  \notag \\
	&\ \ + \frac{1}{2N^2} \partial^2_{x_ix_i} f(W)+ \frac{1}{2N^2} \partial^2_{x_jx_j} f(W) - \frac{1}{N^2} \partial^2_{x_ix_j} f(W) + \eps_3(W)\bigg]\notag \\
	&\ \ +  \frac{\theta}{L} \sum_{i,j=1}^L NW_i \bigg[ -\frac1N \partial_{x_i}f(W) + \frac1N \partial_{x_j}f(W) + \eps_2(W) \bigg]\notag \\
	&= \sum_{i=1}^L W_i(1-W_i) \partial^2_{x_ix_i} f(W) -  \sum_{i \neq j}^L W_iW_j \partial_{x_i x_j}f(W) + N^2\sum_{i \neq j}^L W_iW_j\eps_3(W)\notag \\
	&\ \ - \theta\sum_{i =1}^L W_i \partial_{x_i}f(W) + \frac{\theta}{L} \sum_{j=1}^L \partial_{x_j}f(W) + \frac{N\theta}{L} \sum_{i,j=1}^L W_i \eps_2(W),\label{eq:2}
}
where $\eps_2(W)$ and $\eps_3(W)$ are higher order expansion terms that we currently suppress for readability but will explicitly bound later. Again using the representation $W = \sum_{i=1}^L W_i \delta x_i$, we can write
\ban{
\cA_2 f(W) &= \theta \int_0^1 \partial_x f(W) dx - \theta \sum_{i=1}^L W_i \partial_{x_i}f(W) \notag \\
	&\hspace{1cm}+ \sum_{i=1}^L W_i(1-W_i) \partial^2_{x_i x_i} f(W) - \sum_{i \neq j}^LW_iW_j \partial^2_{x_ix_j} f(W). \label{eq:3}
}
Combining~\eq{eq:1}, \eq{eq:2} and \eq{eq:3} yields
\ba{
\abs{ \E h(W) - \E h(Z) } \leq N\theta\abs{ \eps_2(W)} + N^2\abs{\eps_3(W)} + \theta \bbbabs{ \int_0^1 \partial_xf_h(W) dx - \sum_{j=1}^L \frac{1}{L} \partial_{x_j}f_h(W) }.
}
For the final term of the above, note that this is simply the Riemann sum approximation for the integral, and hence via the mean value theorem and Theorem~\ref{thm:stnfcts},
\ba{
\bbbabs{ \int_0^1 \partial_xf_h(W) dx - \sum_{j=1}^L \frac{1}{L} \partial_{x_j}f_h(W) } \leq \frac{\sup_{x,y} \| \partial_{xy}f_h\|_\infty }{2L} \leq \frac{k(k-1)}{2L(\theta+1)} \|\phi\|_\infty.
}
The final result follows from bounding the higher order error terms as below.
\ba{
\eps_2(W) =\frac{1}{2N^2} \partial^2_{x_ix_i} f_h(W^*)+ \frac{1}{2N^2} \partial^2_{x_jx_j} f_h(W^*) - \frac{1}{N^2} \partial^2_{x_ix_j} f_h(W^*),
}
where $W^* = (W + s_1( -\frac{1}{N}\delta_{x_i} + \frac{1}{N} \delta_{x_j}))$ for some $s_1 \in [0,1]$. Hence,
\ba{
\abs{\eps_2(W)} \leq \frac{2}{N^2} \sup_{x,y}\|\partial_{xy} f_h\|_\infty \leq \frac{2k(k-1)}{N^2(\theta + 1)} \|\phi\|_\infty.
}
Similarly,
\ba{
\eps_3(W) = -\frac{1}{6N^3} \partial^3_{x_ix_ix_i}f_h(W^*) + \frac{1}{2N^3} \partial^3_{x_ix_ix_j}f_h(W^*) - \frac{1}{2N^3} \partial^3_{x_ix_ix_j}f_h(W^*) + \frac{1}{6N^3} \partial^3_{x_jx_jx_j}f_h(W^*),
}
where $W^* = (W + s_2( -\frac{1}{N}\delta_{x_i} + \frac{1}{N} \delta_{x_j}))$ for some $s_2 \in [0,1]$. Hence
\ba{
\abs{\eps_3(W)} \leq \frac{4}{3N^3} \sup_{x,y,z}\|\partial_{xyz} f_h\|_\infty \leq \frac{8k(k-1)(k-2)}{9N^3(\theta + 2)} \|\phi\|_\infty.
}
\end{proof}
\begin{proof}[Proof of Corollary~\ref{corollary}]
We omit the details of this proof as it is exactly the same as the proof of \cite[Lemma~1.15]{GR21}, but using the derivative bounds of Theorem~\ref{thm:stnfcts} instead of the bounds of \cite[Theorem~2.7]{GR21}. 
\end{proof}
\bibliographystyle{apalike}
\bibliography{references}

\def\cprime{$'$} \def\cprime{$'$} \def\cprime{$'$} \def\cprime{$'$}
  \def\cprime{$'$} \def\cprime{$'$} \def\cprime{$'$}
\begin{thebibliography}{}

\bibitem[Barbour, 1990]{B90}
Barbour, A. (1990).
\newblock Stein's method for diffusion approximations.
\newblock {\em Probab. Theory and Related Fields}, 84(3):297--322.

\bibitem[Barbour, 1988]{B88}
Barbour, A.~D. (1988).
\newblock Stein's method and {P}oisson process convergence.
\newblock {\em J. Appl. Probab.}, (Special Vol. 25A):175--184.
\newblock A celebration of applied probability.

\bibitem[Barbour and Chen, 2005]{introstein}
Barbour, A.~D. and Chen, L. H.~Y., editors (2005).
\newblock {\em An introduction to {S}tein's method}, volume~4 of {\em Lecture
  Notes Series. Institute for Mathematical Sciences. National University of
  Singapore}.
\newblock Singapore University Press, Singapore; World Scientific Publishing
  Co. Pte. Ltd., Hackensack, NJ.
\newblock Lectures from the Meeting on Stein's Method and Applications: a
  Program in Honor of Charles Stein held at the National University of
  Singapore, Singapore, July 28--August 31, 2003.

\bibitem[Bianchi et~al., 2017]{CDG17}
Bianchi, A., Dommers, S., and Giardin\`a, C. (2017).
\newblock Metastability in the reversible inclusion process.
\newblock {\em Electron. J. Probab.}, 22:Paper No. 70, 34.

\bibitem[Chatterjee, 2014]{Chatterjee2014}
Chatterjee, S. (2014).
\newblock A short survey of {S}tein's method.
\newblock In {\em Proceedings of the {I}nternational {C}ongress of
  {M}athematicians---{S}eoul 2014. {V}ol. {IV}}, pages 1--24. Kyung Moon Sa,
  Seoul.

\bibitem[Chleboun et~al., 2022]{CGG22}
Chleboun, P., Gabriel, S., and Grosskinsky, S. (2022).
\newblock Poisson-{D}irichlet asymptotics in condensing particle systems.
\newblock {\em Electron. J. Probab.}, 27:Paper No. 155, 35.

\bibitem[Ethier and Griffiths, 1993]{EG93}
Ethier, S.~N. and Griffiths, R.~C. (1993).
\newblock The transition function of a {F}leming-{V}iot process.
\newblock {\em Ann. Probab.}, 21(3):1571--1590.

\bibitem[Ethier and Kurtz, 1993]{EK93}
Ethier, S.~N. and Kurtz, T.~G. (1993).
\newblock Fleming-{V}iot processes in population genetics.
\newblock {\em SIAM J. Control Optim.}, 31(2):345--386.

\bibitem[Feng, 2010]{Feng2010}
Feng, S. (2010).
\newblock {\em The {P}oisson-{D}irichlet distribution and related topics}.
\newblock Probability and its Applications (New York). Springer, Heidelberg.
\newblock Models and asymptotic behaviors.

\bibitem[Fleming and Viot, 1979]{FV79}
Fleming, W.~H. and Viot, M. (1979).
\newblock Some measure-valued {M}arkov processes in population genetics theory.
\newblock {\em Indiana Univ. Math. J.}, 28(5):817--843.

\bibitem[Gan and Ross, 2021]{GR21}
Gan, H.~L. and Ross, N. (2021).
\newblock Stein's method for the {P}oisson-{D}irichlet distribution and the
  {E}wens sampling formula, with applications to {W}right-{F}isher models.
\newblock {\em Ann. Appl. Probab.}, 31(2):625--667.

\bibitem[Giardin\`a et~al., 2007]{GKR07}
Giardin\`a, C., Kurchan, J., and Redig, F. (2007).
\newblock Duality and exact correlations for a model of heat conduction.
\newblock {\em J. Math. Phys.}, 48(3):033301, 15.

\bibitem[Giardin\`a et~al., 2009]{GKRV09}
Giardin\`a, C., Kurchan, J., Redig, F., and Vafayi, K. (2009).
\newblock Duality and hidden symmetries in interacting particle systems.
\newblock {\em J. Stat. Phys.}, 135(1):25--55.

\bibitem[G\"{o}tze, 1991]{Gotze1991}
G\"{o}tze, F. (1991).
\newblock On the rate of convergence in the multivariate {CLT}.
\newblock {\em Ann. Probab.}, 19(2):724--739.

\bibitem[Jatuviriyapornchai et~al., 2020]{JCG20}
Jatuviriyapornchai, W., Chleboun, P., and Grosskinsky, S. (2020).
\newblock Structure of the condensed phase in the inclusion process.
\newblock {\em J. Stat. Phys.}, 178(3):682--710.

\bibitem[Kim, 2021]{Kim21}
Kim, S. (2021).
\newblock Second time scale of the metastability of reversible inclusion
  processes.
\newblock {\em Probab. Theory Related Fields}, 180(3-4):1135--1187.

\bibitem[Ley et~al., 2017]{LRS2017}
Ley, C., Reinert, G., and Swan, Y. (2017).
\newblock Stein’s method for comparison of univariate distributions.
\newblock {\em Probability Surveys}, 14:1--52.

\bibitem[Ross, 2011]{Ross11}
Ross, N. (2011).
\newblock Fundamentals of {S}tein's method.
\newblock {\em Probab. Surv.}, 8:210--293.

\bibitem[Stein, 1972]{Stein72}
Stein, C. (1972).
\newblock A bound for the error in the normal approximation to the distribution
  of a sum of dependent random variables.
\newblock In {\em Proceedings of the {S}ixth {B}erkeley {S}ymposium on
  {M}athematical {S}tatistics and {P}robability ({U}niv. {C}alifornia,
  {B}erkeley, {C}alif., 1970/1971), {V}ol. {II}: {P}robability theory}, pages
  583--602. Univ. California Press, Berkeley, Calif.

\end{thebibliography}

\end{document}